\newcommand{\calQ}{\mathcal{Q}}
\newcommand{\Z}{\mathbf{Z}}
\newcommand{\Q}{\mathbf{Q}}
\newcommand{\Spec}{{\mathrm{Spec}}}
\newcommand{\Hom}{\mathrm{Hom}}
\newcommand{\Ext}{\mathrm{Ext}}
\newcommand{\Tor}{\mathrm{Tor}}
\newcommand{\Mod}{\mathrm{Mod}}
\newcommand{\Ann}{\mathrm{Ann}}
\newcommand{\ob}{\mathrm{ob}}
\newcommand{\R}{\mathrm{R}}
\renewcommand{\L}{\mathrm{L}}
\newcommand{\id}{\mathrm{id}}
\newcommand{\coker}{\mathrm{coker}}
\newcommand{\fm}{\mathfrak{m}}
\newcommand{\comment}[1]{}
\DeclareMathOperator{\colim}{colim}
\begin{document}

\bibliographystyle{alpha}

\newtheorem{theorem}{Theorem}[section]
\newtheorem*{theorem*}{Theorem}
\newtheorem*{condition*}{Condition}
\newtheorem*{definition*}{Definition}
\newtheorem{proposition}[theorem]{Proposition}
\newtheorem{lemma}[theorem]{Lemma}
\newtheorem{corollary}[theorem]{Corollary}
\newtheorem{claim}[theorem]{Claim}
\newtheorem{claimex}{Claim}[theorem]

\theoremstyle{definition}
\newtheorem{definition}[theorem]{Definition}
\newtheorem{question}[theorem]{Question}
\newtheorem{remark}[theorem]{Remark}
\newtheorem{example}[theorem]{Example}
\newtheorem{condition}[theorem]{Condition}
\newtheorem{warning}[theorem]{Warning}
\newtheorem{notation}[theorem]{Notation}

\title{Almost direct summands}
\author{Bhargav Bhatt}
\begin{abstract}
In this short note, we point out how some new cases of the direct summand conjecture can be deduced from fundamental theorems in $p$-adic Hodge theory due to Faltings. The cases tackled include the ones when the ramification of the map being considered lies entirely in characteristic $p$.
\end{abstract}
\maketitle

The direct summand conjecture of Hochster asserts that any module-finite extension $R \to S$ of commutative rings with $R$ regular is a direct summand as an $R$-module map. This conjecture is known when $R$ contains a field (see \cite{HochsterDSC}), or if $\dim(R) \leq 3$ (see \cite{HeitmannDSC3}). The general mixed characteristic case remains wide open, and is a fundamental open problem in commutative algebra; we refer the interested reader to \cite{HochsterSurvey} for further information. 

Our goal in this note is to explain how Faltings' theory of almost \'etale extensions in $p$-adic Hodge theory can be used to prove some new cases of the direct summand conjecture. Most notably, this method allows us handle the case when the ramification lies entirely in positive characteristic. To state our result precisely, we fix a prime number $p$, and let $V$ be a complete $p$-adic discrete valuation ring whose residue field $k$ satisfies $[k:k^p] < \infty$; for example, we could take $V$ to be a finite extension of $\Z_p$. We will prove the following:

\begin{theorem}
\label{dscetalesncd}
Let $R$ be a smooth $V$-algebra, and let $f:R \to S$ be the normalisation of $R$ in a finite extension of its fraction field. Assume that there exists an \'etale map $V[T_1,\dots,T_d] \to R$ such that $f \otimes_R R[\frac{1}{p \cdot T_1 \cdot \dots \cdot T_d}]$ is unramified. Then $f:R \to S$ is a direct summand as an $R$-module map.
\end{theorem}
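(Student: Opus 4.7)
The plan is to combine Hochster's local cohomology criterion for the direct summand conjecture with Faltings' almost purity theorem, applied to the ``perfectoid cover'' of $R$ obtained by adjoining $p$-power roots of $p$ and of the coordinates $T_1,\dots,T_d$.

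\emph{Reduction.} Being a direct summand of a module-finite extension is detected after localizing and completing at a maximal ideal, and the equal characteristic case is classical \cite{HochsterDSC}; so I may assume $R$ is a complete regular local ring of dimension $d+1$ containing $p$ in its maximal ideal $\fm$. The hypothesis $[k:k^p]<\infty$, together with Cohen's structure theorem, lets me arrange (up to an unramified base change) that $R$ is a formal power series ring over a complete unramified extension of $V$, with $p, T_1, \dots, T_d$ a regular system of parameters; the assumption on the ramification locus of $f$ descends along this reduction.

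\emph{Local cohomology criterion.} By Matlis duality, $H^{d+1}_\fm(R)$ is the injective hull of the residue field, with one-dimensional socle generated by the fundamental class
\[
\eta \;:=\; \left[\frac{1}{p\,T_1\cdots T_d}\right].
\]
Hence $f:R\to S$ is a direct summand if and only if $\eta$ has nonzero image in $H^{d+1}_\fm(S)$. I argue by contradiction: assume $\eta$ maps to zero.

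\emph{Almost purity and almost splitting.} Let $R_\infty$ be the $p$-adic completion of $\colim_n R[p^{1/p^n},T_1^{1/p^n},\dots,T_d^{1/p^n}]$, and let $S_\infty$ be the $p$-adic completion of the normalization of $S\otimes_R R_\infty$. The ring $R_\infty$ is perfectoid, and the assumption that $f$ is unramified outside $V(pT_1\cdots T_d)$ is exactly what Faltings' almost purity theorem requires to conclude that $R_\infty\to S_\infty$ is almost finite \'etale with respect to the ideal $\fm_*:=(pT_1\cdots T_d)^{1/p^\infty}R_\infty$. In particular, for each $\epsilon\in\fm_*$ there is an $R_\infty$-linear retraction $\phi_\epsilon:S_\infty\to R_\infty$ satisfying $\phi_\epsilon\circ(\mathrm{inclusion})=\epsilon\cdot\id$. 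Letting $\eta_\infty\in H^{d+1}_\fm(R_\infty)$ denote the image of $\eta$, the vanishing assumption forces the image $i_*(\eta_\infty)\in H^{d+1}_\fm(S_\infty)$ to be zero, and applying $(\phi_\epsilon)_*$ then gives $\epsilon\cdot\eta_\infty=0$ for every $\epsilon\in\fm_*$.

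\emph{Contradiction.} For $\epsilon=(pT_1\cdots T_d)^{1/p^n}$ with $n\ge 1$, a \v{C}ech computation identifies $\epsilon\cdot\eta_\infty$ with the class of $(pT_1\cdots T_d)^{1/p^n-1}$, which is nonzero in $H^{d+1}_\fm(R_\infty)$ as soon as $(pT_1\cdots T_d)^{1/p^n}\notin (p,T_1,\dots,T_d)R_\infty$. This last non-containment follows from a monomial/valuation argument in the tilt of $R_\infty$: the element has ``tilted'' valuation $1/p^n$ in each of the $d+1$ coordinate directions, whereas every element of $(p,T_1,\dots,T_d)R_\infty$ has valuation $\ge 1$ in at least one direction. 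The main obstacle is the clean deployment of Faltings' almost purity theorem in this setting, including the passage from ``almost finite \'etale'' to the family of honest $R_\infty$-linear retractions $\phi_\epsilon$ in a form functorial enough to act on local cohomology; once that is set up, the rest is formal.
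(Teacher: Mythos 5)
Your route is genuinely different from the paper's. The paper never localizes: it packages the splitting obstruction as a class $\ob(f)\in\Ext^1_R(\calQ,R)$ for $\calQ=S/R$, uses Theorem \ref{almostpuritythm} together with Lemma \ref{almostfiniteflatquot} to see that the cokernel of $R_\infty\to S_\infty$ is almost projective (so the extension class becomes almost zero over $R_\infty$), and then descends ``almost zero over $R_\infty$'' to ``zero over $R$'' by the elementary annihilator argument of Lemma \ref{almostzeroelts}, which uses only that $R\to R_\infty$ is faithfully flat and $R$ is essentially of finite type over $V$. You instead complete at a maximal ideal and invoke Hochster's local cohomology criterion, trading that descent lemma for an explicit non-vanishing statement $\epsilon\cdot\eta_\infty\neq 0$ in $H^{d+1}_{\fm}(R_\infty)$. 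Your scheme is viable (it is essentially the strategy used later for the general conjecture), but it pays two costs the paper avoids: you must extract honest retractions $\phi_\epsilon$ from almost \'etaleness, and you must compute in the local cohomology of the huge ring $R_\infty$. For the latter, the cleanest justification is not tilting but the remark that $R_\infty$ is a filtered colimit of regular rings along faithfully flat (hence pure) maps, so the class can be tested at a finite, regular level.

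There are two genuine gaps as written. First, the reduction claims one can arrange $p,T_1,\dots,T_d$ to be a regular system of parameters; this fails whenever $V$ is ramified over $\Z_p$, i.e.\ $p=u\pi^e$ with $e>1$. Then $[1/(pT_1\cdots T_d)]$ is \emph{not} a socle generator of $H^{d+1}_{\fm}(R)$, and showing that it survives in $H^{d+1}_{\fm}(S)$ does not give injectivity of $H^{d+1}_{\fm}(R)\to H^{d+1}_{\fm}(S)$, hence does not give the splitting. You must run the argument with $\eta=[1/(\pi T_1\cdots T_d)]$ and, say, $\epsilon=p^{1/p^n}$, whose $\pi$-adic valuation $e/p^n$ is eventually $<1$, so that $\epsilon\cdot\eta_\infty$ is still a visibly nonzero class at a finite regular level. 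Second, the passage from ``$R_\infty\to S_\infty$ is almost finite \'etale'' to the family of honest $R_\infty$-linear maps $\phi_\epsilon$ with $\phi_\epsilon\circ i=\epsilon\cdot\id$ is exactly where the content lies, and you have only flagged it; it is true (via the almost trace, or via almost projectivity plus an evaluation/idempotent argument), but it is not formal --- note that the paper's substitute, Lemma \ref{almostfiniteflatquot}, still has to appeal to \cite[Lemma 4.1.5]{GabberRameroART}. Relatedly, Faltings' theorem as stated here lives over $\overline{V}$ with all $n!$-th roots of the $T_i$ adjoined and with almost mathematics measured by the maximal ideal of $\overline{V}$; your tower adjoins only $p$-power roots of $p$ and the $T_i$, and when the residue field $k$ is imperfect the resulting ring is not perfectoid unless one also adjoins $p$-power roots of units lifting a $p$-basis of $k$. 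All of this is repairable, but it is precisely where the work is.
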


This note is organised as follows. In \S \ref{sec:almostartrev}, we review the basics of almost ring theory and recall Faltings' almost purity result, the main ingredient of our proof of Theorem \ref{dscetalesncd}. We then prove the theorem in \S \ref{sec:almostmainthm}. 

\section{Review of almost ring theory}
\label{sec:almostartrev}
Our proof uses almost ring theory as discovered by Tate in \cite{Tatepdivgrps}, and developed by Faltings in \cite{FaltingspHT} and \cite{FaltingsAEE} with $p$-adic Hodge theoretic applications in mind. The book \cite{GabberRameroART} provides a systematic treatment of almost ring theory, while \cite{OlssonFaltingsAEE} provides a detailed and comprehensible presentation of the arithmetic applications of Faltings' ideas. We review below the aspects of this theory most relevant to the proof of Theorem \ref{dscetalesncd}, deferring to the other sources for proofs.

\subsection{Almost mathematics}

Let $\overline{V}$ denote a valuation ring whose value group $\Lambda$ is dense in $\Q$, and let $\fm \subset \overline{V}$ be the maximal ideal. Note that $\fm$ is necessarily not finitely generated. For each non-negative $\alpha \in \Lambda$, let $\fm_\alpha$ be the (necessarily principal) ideal of elements of valuation at least $\alpha$, and let $\pi \in \overline{V}$ denote an element of valuation $1$.

\begin{example}
Let $K$ denote the fraction field of a complete $p$-adic discrete valuation ring $V$, and let $\overline{V}$ denote the integral closure of $V$ in a fixed algebraic closure $\overline{K}$ of $K$. Then $\overline{V}$ is a valuation ring whose value group is $\Q$ and, consequently, almost ring theory applies; this will be the only example relevant to us. 
\end{example}

The maximal ideal $\fm$ can be thought of as the ideal of elements with positive valuation. As the value group is dense, it follows that $\fm^2 = \fm$. This observation implies that the category $\Sigma$ of $\fm$-torsion $\overline{V}$-modules is a Serre subcategory of the abelian category $\Mod(\overline{V})$ of $\overline{V}$-modules. We refer to modules in $\Sigma$ as {\em almost zero modules}. By general nonsense, we may form the quotient abelian category 
\[\Mod(\overline{V})^a := \Mod(\overline{V})/\Sigma\]
of {\em almost} $\overline{V}$-modules. We denote the localisation functor by $M \mapsto M^a$. With this notation, we have the following description of maps in $\Mod(\overline{V})^a$ (see \S 2.2.4 of \cite{GabberRameroART}):
\[ \Hom_{\Mod(\overline{V})^a}(M^a,N^a) = \Hom_{\Mod(\overline{V})}(\fm \otimes_{\overline{V}} M,N). \]

As $\Sigma \subset \Mod(\overline{V})^a$ is closed under tensor products, the quotient $\Mod(\overline{V})^a$ inherits the structure of a symmetric $\otimes$-category with the quotient map $\Mod(\overline{V}) \to \Mod(\overline{V})^a$  being a symmetric $\otimes$-functor. This formalism allows one to systematically define ``almost analogs'' of standard notions of ring theory and, indeed, develop ``almost algebraic geometry.'' Informally, we may think of almost algebraic geometry as the study of algebraic geometry over $\overline{V}$ where all the results hold up to $\fm$-torsion. To see this program carried out in the appropriate level of generality, we suggest \cite{GabberRameroART}. We will adopt the more pragmatic stance of explaining the notions we need to precisely state Faltings' almost purity theorem. We start with the following set of definitions, borrowed from \cite{OlssonFaltingsAEE}, which allow us to define the fundamental notion of an almost \'etale morphism:

\begin{definition}
\label{arthomprop}
Let $A$ be a $\overline{V}$-algebra, and let $M$ be an $A$-module. We say that
\begin{enumerate}
\item $M$ is {\em almost projective} if $\Ext^i_A(M,N)$ is almost zero for all $A$-modules $N$ and $i > 0$.
\item $M$ is {\em almost flat} if $\Tor_i^A(M,N)$ is almost zero for all $A$-modules $N$ and $i > 0$.
\item $M$ is {\em almost faithfully flat} if it is almost flat and if for any $A$-modules $N_1$ and $N_2$, the natural map
\[ \Hom_R(N_1,N_2) \to \Hom_R(N_1 \otimes M,N_2 \otimes M) \]
has an almost zero kernel.
\item $M$ is {\em almost finitely generated} if for every strictly positive $\alpha \in \Lambda$, there exists a finitely generated $A$-module $N_\alpha$ and a $\pi^\alpha$-isomorphism $N_\alpha \simeq M$, i.e., there are maps $\phi_\alpha:N_\alpha \to M$ and $\psi_\alpha:M \to N_\alpha$ such that $\phi_\alpha \circ \psi_\alpha = \pi^\alpha \circ \id$ and $\psi_\alpha \circ \phi_\alpha = \pi^\alpha \circ \id$.
\end{enumerate}
\end{definition}

\begin{remark}
The properties defined in Definition \ref{arthomprop} are all invariant under almost isomorphisms and, consequently, depend only on the almost isomorphism class $M^a \in \Mod(\overline{V})^a$. This is clear for flatness and projectivity by the exactness of the localisation functor $\Mod(\overline{V}) \to \Mod(\overline{V})^a$. The issue of finite generation is more delicate, and we refer the interested reader to \S 2.3 of \cite{GabberRameroART} for a detailed discussion. We simply point out here that $\fm \subset \overline{V}$ is almost finitely generated with the definition given above.
 \end{remark}

\begin{warning}
As all the properties defined in Definition \ref{arthomprop} are invariant under almost isomorphisms, it is tempting to define notions such as almost projectivity purely in terms of the internal homological algebra of the abelian category $\Mod(\overline{V})^a$, i.e., in terms of the internal $\Ext$ functors. However, this approach suffers from two defects. First, as the category $\Mod(\overline{V})^a$ lacks enough projectives (the generating object $\overline{V}^a$ is not projective), one is forced to resort to a Yoneda definition of the $\Ext$ groups which is typically harder to work with. More seriously, as the Yoneda definition pays no attention to the $\otimes$-structure, the resulting theory does not interact well with the $\otimes$-structure. 
\end{warning}

Once we have access to a good theory of flatness and finite generation, one can copy the standard notions in algebraic geometry to arrive at the fundamental notion of an almost \'etale morphism.

\begin{definition}
A morphism $A \to B$  of $\overline{V}$-algebras is called an {\em almost \'etale covering} if 
\begin{enumerate}
\item $B$ is almost finitely generated, almost faithfully flat, and almost projective as an $A$-module.
\item $B$ is almost finitely generated and almost projective as a $B \otimes_A B$-module.
\end{enumerate}
\end{definition}

\begin{example}
\label{tatedim1purity}
We discuss the first non-trivial example of an almost \'etale morphism, which was discovered by Tate in his study \cite{Tatepdivgrps} of $p$-divisible groups; we follow Faltings' exposition from \cite[Theorem 1.2]{FaltingspHT}. Let $V$ be a finite extension of $\Z_p$, and let $\overline{V}$ be the normalisation of $V$ in a fixed  algebraic closure of its fraction field. Fix a tower $V = V_0 \subset V_1 \subset \dots \subset V_n \subset \dots $ of normal subextensions of $\overline{V}$ such that $\Omega^1_{V_{n+1}/V_n}$ has $V_{n+1}/p$ as a quotient for each $n$; such towers can be produced starting with a totally ramified $\Z_p$-extension of $V$, and an explicit example may be constructed starting with the tower of extensions obtained by adjoining $p$-power roots of $1$. We set $V_\infty = \colim_n V_n$. Then $V_\infty$ can be shown to be a valuation ring whose value group is dense in $\Q$ and, consequently, almost ring theory applies. The key observation (due to Tate, in different language) is that the natural map $V_\infty \to \overline{V}$ is almost \'etale (see \cite[\S 3.2, Proposition 9]{Tatepdivgrps}). A consequence of this fact is that many computations over $\overline{V}$ (such as those of certain Galois cohomology groups) can be reduced to computations over $V_\infty$, and the latter are typically much more tractable as $V_\infty$ can be chosen to have properties adapted to the problem at hand. The higher dimensional version of such arguments plays a key role in \cite{FaltingspHT}.
\end{example}

Finally, we record a basic fact concerning the almost analog of finite flat morphisms that is used in the proof of Theorem \ref{dscetalesncd}.

\begin{lemma}
\label{almostfiniteflatquot}
Let $f:A \to B$ be an inclusion of $\overline{V}$-algebras. Assume that $f$ makes $B$ an almost projective and almost faithfully flat $A$-module. Then the cokernel $\coker(f)$ is an almost projective $A$-module.
\end{lemma}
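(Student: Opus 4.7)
The plan is to show that $\Ext^i_A(C,N)$ is almost zero for every $A$-module $N$ and every $i \geq 1$, where $C := \coker(f)$; this is the defining condition of $C$ being almost projective. I would begin by applying $\Hom_A(-,N)$ to the short exact sequence $0 \to A \xrightarrow{f} B \to C \to 0$. Since $A$ is projective over itself, $\Ext^j_A(A,N) = 0$ for $j \geq 1$; and by the almost projectivity hypothesis, $\Ext^j_A(B,N)^a = 0$ for $j \geq 1$. The long exact sequence then immediately yields $\Ext^i_A(C,N) \simeq \Ext^i_A(B,N)$ almost for $i \geq 2$, and in degree one the problem reduces to showing that the restriction map $\rho_N \colon \Hom_A(B,N) \to \Hom_A(A,N) = N$ is almost surjective for every $A$-module $N$. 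By naturality of the connecting homomorphism this is equivalent to the single claim that the extension class $[E] \in \Ext^1_A(C,A)$ of the given sequence is almost zero.

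The main idea for killing $[E]$ is a base-change trick exploiting both hypotheses on $B$. Tensoring the sequence with $B$ (valid up to almost zero error since $B$ is almost flat over $A$) yields
\[ 0 \to B \to B \otimes_A B \to B \otimes_A C \to 0, \]
a sequence of $B$-modules in which the left inclusion $b \mapsto b \otimes 1$ admits the $B$-linear splitting furnished by the multiplication map $\mu \colon B \otimes_A B \to B$. Consequently $B \otimes_A C$ is an almost $B$-linear direct summand of $B \otimes_A B$. The latter is almost projective as a $B$-module: indeed, for any $B$-module $M'$, Hom-tensor adjunction together with almost flatness of $B$ (used to see that $P_\bullet \otimes_A B$ almost resolves $B \otimes_A B$ whenever $P_\bullet \to B$ is an $A$-projective resolution) identifies $\Ext^i_B(B \otimes_A B, M')^a$ with $\Ext^i_A(B, M')^a = 0$ for $i \geq 1$. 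Hence $B \otimes_A C$ is almost projective over $B$.

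The final and principal obstacle is descent: transferring almost projectivity of $C \otimes_A B$ over $B$ back to almost projectivity of $C$ over $A$ using almost faithful flatness. The plan is to fix an $A$-projective resolution $P_\bullet \to C$; by almost flatness, $P_\bullet \otimes_A B$ is an almost $B$-projective resolution of $C \otimes_A B$, and Hom-tensor adjunction then identifies $\Hom_A(P_\bullet, M')$ with $\Hom_B(P_\bullet \otimes_A B, M')$ for any $B$-module $M'$. Taking cohomology yields the almost vanishing of $\Ext^i_A(C, M')^a = \Ext^i_B(C \otimes_A B, M')^a$ for every $B$-module $M'$ and every $i \geq 1$. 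The kernel-detection form of almost faithful flatness built into Definition \ref{arthomprop}(3) guarantees that the canonical map $N \to N \otimes_A B$ has almost zero kernel for each $A$-module $N$; combining this with the $M' = N \otimes_A B$ case of the previous vanishing and a short homological bookkeeping argument using the resulting almost exact sequence $0 \to N \to N \otimes_A B \to Q \to 0$ (and iterating on $Q$ if necessary) then upgrades the conclusion to $\Ext^i_A(C, N)^a = 0$ for every $A$-module $N$ and every $i \geq 1$, giving $[E]^a = 0$ and completing the argument.
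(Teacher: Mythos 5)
Your reduction to degree one (the long exact sequence kills $\Ext^{\geq 2}_A(C,-)$ up to $\Ext^{\geq 2}_A(B,-)$, and the degree-one obstruction is controlled by the class $[E]\in\Ext^1_A(C,A)$) is correct, and your middle step is essentially the paper's: tensor the sequence with $B$, use the multiplication map to split $B \to B\otimes_A B$, and conclude that $C\otimes_A B$ is an almost direct summand of $B\otimes_A B$, hence almost projective. (The paper runs this with $A$-module structures throughout -- $B\otimes_A B$ is almost projective over $A$ because both factors are almost projective and one is almost flat, via $\R\Hom(M\otimes^{\L}N,-)\simeq\R\Hom(M,\R\Hom(N,-))$ -- whereas you work with $B$-module structures; both are fine, modulo the spectral-sequence care needed to replace $\otimes^{\L}$ by $\otimes$.)

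The gap is in your final descent step. From ``$\Ext^i_A(C,M')$ is almost zero for every \emph{$B$-module} $M'$ and $i\geq 1$'' you cannot get to arbitrary $A$-modules $N$ by the bookkeeping you describe. The long exact sequence attached to $0\to N\to N\otimes_A B\to Q\to 0$ (with $Q\simeq N\otimes_A C$) gives
\[ \Hom_A(C,N\otimes_A B)\to\Hom_A(C,Q)\to\Ext^1_A(C,N)\to\Ext^1_A(C,N\otimes_A B), \]
so after killing the right-hand term you have identified $\Ext^1_A(C,N)$, up to almost isomorphism, with the cokernel of $\Hom_A(C,N\otimes_A B)\to\Hom_A(C,Q)$. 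That cokernel lives in degree $0$ for $Q$, so ``iterating on $Q$'' shifts degrees the wrong way: applying your vanishing to $Q$ controls $\Ext^{\geq 1}_A(C,Q)$, not $\Hom_A(C,Q)$, and the almost surjectivity of $\Hom_A(C,N\otimes_A B)\to\Hom_A(C,Q)$ is exactly equivalent to the almost vanishing of $\Ext^1_A(C,N)$ that you are trying to prove -- the argument is circular. Almost purity of $N\to N\otimes_A B$ (injectivity up to almost zero kernel) is strictly weaker than the splitting you would need to make $\Ext^i_A(C,N)$ a summand of $\Ext^i_A(C,N\otimes_A B)$. Descending (almost) projectivity along (almost) faithfully flat maps is genuinely nontrivial -- already in the classical setting it is a theorem of Raynaud--Gruson type rather than a diagram chase -- and the paper does not attempt it by hand: it delegates precisely this step to \cite[Lemma 4.1.5]{GabberRameroART}. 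To complete your proof you must either invoke that descent result or supply an argument of comparable substance (for instance, an almost trace/splitting argument exploiting almost projectivity of $B$ over $A$).
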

\begin{proof}
For any two $A$-modules $M$ and $N$, we have an identification of functors
\[\R\Hom(M \otimes^\L N,-) \simeq \R\Hom(M,\R\Hom(N,-)). \]
A spectral sequence argument then shows that if $M$ and $N$ are almost projective, so is $M \otimes^\L N$. If, in addition, one of $M$ or $N$ is also almost flat, then one has an almost isomorphism $M \otimes^\L N \simeq M \otimes N$. Hence, if both $M$ and $N$ are almost projective and one of them is almost flat, then $M \otimes N$ is almost projective. In particular,  we see that $B \otimes_A B$ is almost projective. Now note that we have an exact sequence
\[ 0 \to A \to B \to \coker(f) \to 0. \]
Tensoring this over $A$ with $B$ gives new exact sequence
\[ 0 \to B \to B \otimes_A B \to \coker(f) \otimes_A B \to 0.\]
The multiplication map on $B$ splits this exact sequence. Thus, $\coker(f) \otimes_A B$ is a direct summand of the almost projective $A$-module $B \otimes_A B$ and, consequently, almost projective itself. By \cite[Lemma 4.1.5]{GabberRameroART}, it follows that $\coker(f)$ is also an almost projective $A$-module.
\end{proof}

\subsection{Faltings' purity theorem}
\label{Faltingsapt}

We now state the version of Faltings' almost purity theorem most relevant to Theorem \ref{dscetalesncd}: the case of good reduction with ramification supported along a simple normal crossings divisor. There exist more general statements than the one stated below; notably, the case of semistable reduction is known.  We refer the reader to \cite[\S 2b]{FaltingsAEE} for the precise statements with proofs.

Let $V$ be a complete $p$-adic discrete valuation ring whose residue field $k$ satisfies $[k:k^p] < \infty$. Let $\overline{V}$ be the normalisation of $V$ in a fixed algebraic closure of its fraction field. We will work with almost ring theory over $\overline{V}$. Let $R$ be a smooth $V$-algebra such that $R_{\overline{V}} := R \otimes_V \overline{V}$ is a domain. Assume we are given an \'etale morphism  $V[T_1,\cdots,T_d] \to R$; such a chart always exists Zariski locally on $\Spec(R)$. Then we define a sequence of $R_{\overline{V}}$-algebras by
\[ R_n = \overline{V}[T_i^{\frac{1}{n!}}] \otimes_{V[T_i]} R. \]
One can check that the $\overline{V}$-algebras $R_n$ are finitely presented and smooth over $\overline{V}$ and, in particular, normal. Moreover, by construction, there are natural maps $R_n \to R_m$ for $n \leq m$, and we set $R_\infty = \colim_n R_n$ where the colimit is taken along the preceding maps. Note that $R_{\overline{V}} \to R_n$ is finite and faithfully flat, and consequently, $R_{\overline{V}} \to R_\infty$ is integral and faithfully flat. An elementary calculation with differentials also shows that the composite map $R \to R_{\overline{V}} \to R_\infty$ is unramified away from the divisor defined by the function $pT_1\cdots T_d$. The purity theorem says that $R \to R_\infty$ is the maximal extension of $R$ with this last property provided one works in the almost category.

\begin{theorem}[Faltings]
\label{almostpuritythm}
Let $f:R \to S$ be the normalisation of $R$ in a finite extension of its fraction field. Assume that the induced map $f \otimes_R R[\frac{1}{pT_1\cdots T_d}]$ is \'etale. If $S_n$ denotes the normalisation of $S \otimes_R R_n$, and $S_\infty = \colim_n S_n$, then the induced map $R_\infty \to S_\infty$ is an almost \'etale covering.
\end{theorem}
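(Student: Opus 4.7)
The plan is to reduce the statement to Tate's one-dimensional theorem (Example \ref{tatedim1purity}), exploiting the simple normal crossings structure of the ramification divisor $\{pT_1\cdots T_d = 0\}$. First, since the two conditions defining an almost \'etale covering are local for the faithfully flat topology on $\Spec(R_\infty)$, one may verify them at generic points of each irreducible component of the ramification locus. Away from $\{pT_1\cdots T_d = 0\}$ the map $R \to S$ is \'etale by hypothesis, so there is nothing to prove there.

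At the generic point of a component $\{T_i = 0\}$, after completion $R_\infty$ becomes a valuation ring containing all $n$-th roots of the local uniformiser $T_i$; at a generic point of $\{p = 0\}$, $R_\infty$ contains $\overline{V}$ and in particular the tower $V_\infty$ of Example \ref{tatedim1purity}. In either case, Abhyankar's lemma applied at a finite level $R_n$ kills the prime-to-$p$ (tame) part of the ramification of $S_n/R_n$, and the remaining wild ramification is absorbed by Tate's theorem applied to the resulting one-dimensional tower. One then propagates this codimension-one almost \'etaleness globally by analysing the trace pairing $S_\infty \otimes_{R_\infty} S_\infty \to R_\infty$, assembled as a colimit of finite traces at each level. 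Almost perfectness of this pairing at every codimension-one point of $\Spec(R_\infty)$, combined with $S_2$-type reflexivity coming from the presentation of $R_\infty$ as a filtered colimit of normal rings, forces the discriminant to be almost the unit ideal. Once the trace pairing is almost perfect, Lemma \ref{almostfiniteflatquot} together with a formal argument in the almost category yields almost projectivity of $S_\infty$ as an $R_\infty$-module as well as the second condition involving $S_\infty \otimes_{R_\infty} S_\infty$.

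The main obstacle lies at intersections of two or more components of $\{pT_1\cdots T_d = 0\}$, where the wild ramification behaviours along different components interact and the one-dimensional analysis provides no direct information. The globalisation via reflexivity also requires care because $R_\infty$, being only a colimit of regular rings, need not itself be Noetherian, so one cannot invoke standard purity results naively. Faltings resolves these points by passing to suitable formal models in which the local structure at each stratum can be described explicitly, coordinating the extraction of roots along all incident components of the divisor at each finite stage of the tower, and running a delicate induction on the codimension of the stratum. Carrying this last step out rigorously is where the bulk of the technical effort in \cite{FaltingsAEE} resides, and I would ultimately refer there for the details.
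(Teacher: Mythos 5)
A preliminary point: the paper does not prove Theorem \ref{almostpuritythm} at all. It is imported as a black box, with an explicit pointer to \cite[\S 2b]{FaltingsAEE} for statements and proofs. So there is no in-paper argument to compare yours against; the only question is whether your sketch would itself establish the theorem. It would not, and not only because of the parts you openly defer.

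Two concrete gaps. First, your opening reduction is not valid as stated: almost finite generation and almost projectivity of $S_\infty$ over the non-Noetherian ring $R_\infty$ are not conditions one can check at the generic points of the components of $\{pT_1\cdots T_d=0\}$, nor are they local in any naive flat-topology sense. The legitimate codimension-one reduction in Faltings' argument runs in the opposite order: one first identifies almost \'etaleness of a generically \'etale finite extension of normal rings with almost invertibility of the trace form, and only then uses that the associated discriminant ideal is determined in codimension one by reflexivity. You do mention the trace pairing, but you invoke the codimension-one localization before you have the tool that justifies it. Second, and more seriously, the one codimension-one point where the genuine difficulty sits --- the generic point of the special fibre $\{p=0\}$ --- is not disposed of by ``Tate's theorem applied to the resulting one-dimensional tower.'' Example \ref{tatedim1purity} concerns the absolute integral closure of a complete discrete valuation ring; at the generic point of $\{p=0\}$ in $\Spec(R_\infty)$ one has a rank-one valuation ring whose residue field is a huge imperfect extension (containing all $p$-power roots of the images of the $T_i$), and the extension $\Frac(R_\infty)\subset\Frac(S_\infty)$ is not pulled back from any one-dimensional situation. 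Faltings' treatment of this case is a genuine generalization of Tate's argument, resting on the Frobenius of $R_\infty/pR_\infty$ --- this is precisely where the hypothesis $[k:k^p]<\infty$ and the adjunction of $p$-power roots of the $T_i$ are used --- and cannot be replaced by a citation of Tate. Together with your explicit deferral of the analysis at deeper strata, this makes the proposal a road map rather than a proof. Since the paper itself treats the theorem as an external input, deferring to \cite{FaltingsAEE} is the right move here, but the sketch should then be presented as motivation rather than as an argument.
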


\begin{remark}
Theorem \ref{almostpuritythm} can be thought of as a mixed characteristic analog of Abhyankar's lemma without tameness restrictions. Recall that Abhyankar's lemma asserts (see \cite[Expos\'e XIII, Proposition 5.2]{SGA1}) that for any regular local ring $R$, the maximal extension of $R[T_1,\cdots,T_d]$ tamely ramified along the divisor associated to $T_1\cdots T_d$ may be obtained by adjoining all $n$-power roots of the $T_i$'s, where $n$ runs through integers invertible on $R$. The purity theorem does away with the tameness restrictions at the expense of only {\em almost} describing the maximal extension ramified along a normal crossings divisor in mixed characteristic. Most notably, it allows for one of the parameters $T_i$ to be replaced by $p$.
\end{remark}

\section{Proof of Theorem \ref{dscetalesncd}}
\label{sec:almostmainthm}

Our goal in this section is to prove Theorem \ref{dscetalesncd}. Correspondingly, let $V$ be as in Theorem \ref{dscetalesncd}, and let $\overline{V}$ be the normalisation of $V$ in a fixed algebraic closure of its fraction field. An essential idea informing the construction of almost ring theory is that the passage from algebraic geometry over $V$ to almost algebraic geometry over $\overline{V}$ is fairly faithful. We record one manifestation of this idea that will be useful to us.

\begin{lemma}
\label{almostzeroelts}
Let $R$ be a flat $V$-algebra essentially of finite type, and let $R_\infty$ a faithfully flat $R_{\overline{V}}$-algebra. If $M$ is an $R$-module such that $M \otimes_R R_\infty$ is zero in $\Mod(\overline{V})^a$, then $M = 0$. 
\end{lemma}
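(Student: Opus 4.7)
The plan is to unpack ``almost zero'' and then descend twice along faithfully flat maps, before reducing to a computation on a single cyclic submodule. The hypothesis $(M \otimes_R R_\infty)^a = 0$ unpacks as $\fm \cdot (M \otimes_R R_\infty) = 0$, where $\fm \subset \overline{V}$ acts via the structure map $\overline{V} \to R_\infty$. Writing $M_{\overline{V}} := M \otimes_V \overline{V}$, there is a natural identification $M \otimes_R R_\infty = M_{\overline{V}} \otimes_{R_{\overline{V}}} R_\infty$. Tensoring the inclusion $\fm M_{\overline{V}} \hookrightarrow M_{\overline{V}}$ with the flat $R_{\overline{V}}$-algebra $R_\infty$ identifies $(\fm M_{\overline{V}}) \otimes_{R_{\overline{V}}} R_\infty$ with the submodule $\fm \cdot (M \otimes_R R_\infty)$ of $M \otimes_R R_\infty$, which vanishes by hypothesis. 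Faithful flatness of $R_{\overline{V}} \to R_\infty$ then gives $\fm M_{\overline{V}} = 0$.

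It remains to deduce from this that $M = 0$. I would argue one cyclic submodule at a time: fix $m \in M$ and consider $Vm \subseteq M$. Since $V$ is a DVR with uniformizer $\varpi$, the $V$-module $Vm$ is isomorphic to $V/I$ where $I = \Ann_V(m)$ is either $0$, $\varpi^n V$ for some $n \geq 1$, or $V$. Flatness of $V \to \overline{V}$ gives an injection $Vm \otimes_V \overline{V} \cong \overline{V}/I\overline{V} \hookrightarrow M_{\overline{V}}$, so $\overline{V}/I\overline{V}$ is annihilated by $\fm$, i.e., $\fm \subseteq I\overline{V}$. But in the first two cases $I\overline{V}$ is either $0$ or $\varpi^n \overline{V}$, and $\fm$ is contained in neither: the value group of $\overline{V}$ is $\Q$, so $\fm$ contains elements of arbitrarily small positive valuation. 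This forces $I = V$, whence $m = 0$, and hence $M = 0$.

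The delicate step is the descent identification $(\fm M_{\overline{V}}) \otimes_{R_{\overline{V}}} R_\infty = \fm \cdot (M \otimes_R R_\infty)$, but this is just flat base change applied to $\fm \subset \overline{V}$ acting via the factorisation $\overline{V} \to R_{\overline{V}} \to R_\infty$. Curiously, the argument does not seem to use the flatness or the essentially-of-finite-type hypothesis on $R$; these are presumably present in the statement because they are what is available in the intended applications of the lemma.
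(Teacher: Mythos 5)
Your proof is correct. It uses the same two ingredients as the paper's argument --- faithfully flat descent along $R_{\overline{V}} \to R_\infty$ and the fact that $\fm$ contains elements of arbitrarily small positive valuation whereas a proper nonzero ideal of $V$ extends to an ideal of $\overline{V}$ bounded away from valuation $0$ --- but organizes them in the opposite order. The paper fixes a nonzero $x \in M$ at the outset, forms $I = \Ann_R(x)$, produces an injection $V/p^a \hookrightarrow R/I$ with $a>0$, and then \emph{ascends} this injection through $\overline{V}$ and $R_\infty$ to contradict $\fm R_\infty \subseteq \Ann(x\otimes 1)$. You instead \emph{descend} first, using flatness of $R_{\overline{V}} \to R_\infty$ to identify $(\fm M_{\overline{V}})\otimes_{R_{\overline{V}}}R_\infty$ with $\fm\cdot(M\otimes_R R_\infty)=0$ and faithful flatness to conclude $\fm M_{\overline{V}}=0$, and only then argue element by element, replacing the paper's $\Ann_R(x)$ by the simpler $\Ann_V(m)$, where the classification of ideals of the DVR $V$ finishes the job. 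Your version is cleaner in that it isolates the purely $V$-linear statement ``$\fm M_{\overline{V}}=0 \Rightarrow M=0$'' as the endpoint of the descent. Your closing observation is also consistent with the paper's own proof on inspection: flatness of $R$ over $V$ is never invoked there (only flatness of $V\to\overline{V}$ and of its base change $R\to R_{\overline{V}}$, plus the given faithful flatness of $R_{\overline{V}}\to R_\infty$), and the ``essentially of finite type'' hypothesis enters only through the informal claim that $\Ann_R(x)$ contains no arbitrarily small $p$-powers, which really only needs $1\notin\Ann_R(x)$; both hypotheses are indeed just ambient conditions from the intended application.
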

\begin{proof}
Let $x \in M$ be a non-zero element. The assumption that $M \otimes_R R_\infty$ is almost zero implies that $\fm R_\infty \subset \Ann(x \otimes 1)$. Thus, the ideal $\Ann(x \otimes 1) \subset R_\infty$ contains arbitrarily small $p$-powers. On the other hand, by the flatness of $R \to R_\infty$, we see that $\Ann(x \otimes 1) = \Ann(x) \otimes_R R_\infty$. As $I = \Ann(x)$ is an ideal in $R$ which is essentially of finite type over $V$, the smallest power $a$ of $p$ it contains is bounded above $0$ since $x \neq 0$. We formalize this by saying the natural map $V \to R/I$ defined by $1$ induces an injection 
\[ V/p^a \hookrightarrow R/I. \]
for some rational number $a > 0$. Base changing along the flat map $V \to \overline{V}$ gives an injection
\[ \overline{V}/p^a \hookrightarrow (R/I) \otimes_V \overline{V} \simeq (R/I) \otimes_R R_{\overline{V}} \simeq R_{\overline{V}} / (I \otimes_R R_{\overline{V}}). \]
also induced by $1$. The assumption that $R_{\overline{V}} \to R_\infty$ is faithfully flat then shows that we have a composite injection induced by $1$ as follows:
\[ \overline{V}/p^a \hookrightarrow R_{\overline{V}}/(I \otimes_R R_{\overline{V}}) \hookrightarrow R_\infty / (I \otimes_R R_\infty). \]
Since the ideal $I \otimes_R R_\infty = \Ann(x \otimes 1)$ contains arbitrarily small $p$-powers, so does its preimage under $\overline{V} \to R_\infty$. Hence, the map considered above is an injection only when  $a = 0$, which contradicts the assumption that $a > 0$.
\end{proof}

We are now in a position to prove Theorem \ref{dscetalesncd}.

\begin{proof}[Proof of Theorem \ref{dscetalesncd}]
After replacing $V$ with its normalisation in a suitable finite extension, and $R$ and $S$ by the corresponding base changes, we may assume that $R \otimes_V \overline{V}$ is a domain. Our goal then is to show that the exact sequence
\begin{equation}
\label{exseq1}
0 \to R \to S \to \calQ \to 0
\end{equation}
is split, where $\calQ$ is defined to be the cokernel. The obstruction to the existence of a splitting is an element $\ob(f) \in \Ext^1_R(\calQ,R)$. We will show this class almost vanishes after a suitable big extension, and then appeal to Lemma \ref{almostzeroelts}.

The assumptions in the theorem imply that there exists an \'etale morphism $V[T_1,\cdots,T_d] \to R$ such that $R \to S$ is \'etale over $R[\frac{1}{pT_1\cdots T_d}]$. Using this presentation, we define rings $R_n$, $S_n$, $R_\infty$, and $S_\infty$ as in \S \ref{Faltingsapt}. The picture over $R_\infty$ can be summarised as:
\[ \xymatrix{0 \ar[r] & R_\infty \ar@{=}[d] \ar[r] & S \otimes_R R_\infty \ar[d] \ar[r] & \calQ \otimes_R R_\infty \ar[d] \ar[r] & 0 \\
			 0 \ar[r] & R_\infty \ar[r] & S_\infty \ar[r] & \calQ_\infty \ar[r] & 0. } \]
Here the first row is obtained by tensoring the exact sequence (\ref{exseq1}) with $R_\infty$,  while $\calQ_\infty$ is the cokernel of $R_\infty \to S_\infty$. Theorem \ref{almostpuritythm} implies that the map $R_\infty \to S_\infty$ is an almost \'etale covering. By Lemma \ref{almostfiniteflatquot}, the quotient $\calQ_\infty$ is an almost projective $R_\infty$-module. Hence, the second exact sequence is almost zero when viewed as an element of $\Ext^1_{R_\infty}(\calQ_\infty,R_\infty)$. By the commutativity of the diagram, the first exact sequence then defines an almost zero element of $\Ext^1_{R_\infty}(\calQ \otimes_R R_\infty,R_\infty)$. On the other hand, by the flatness of $R \to R_\infty$, we also know that this element is simply $\ob(f) \otimes 1$ under the natural isomorphism $\Ext^1_R(\calQ,R) \otimes_R R_\infty \simeq \Ext^1_{R_\infty}(\calQ \otimes_R R_\infty,R_\infty)$. By construction of $R_\infty$, we see that $R_{\overline{V}} \to R_\infty$ is faithfully flat. By Lemma \ref{almostzeroelts} applied to the submodule of $\Ext^1_R(\calQ,R)$ generated by $\ob(f)$, we see that $\ob(f) = 0$, as desired.
\end{proof}

\begin{remark}
Theorem \ref{dscetalesncd} was stated in the context of good reduction, i.e., the base ring $R$ was assumed to be smooth over the discrete valuation ring $V$. We believe that the proof given above can generalised to handle the case of semistable reduction, the key point being that the result discussed in \S \ref{Faltingsapt} is available in the setting of semistable reduction as well; we leave the details to the interested reader.
\end{remark}

\bibliography{mybib}

\end{document}